\theoremstyle{plain}
\apptocmd{\sloppy}{\hbadness 10000\relax}{}{}
\let\@fnsymbol\@arabic
\newtheorem*{theorem*}{Theorem}
\newtheorem{proposition}{Proposition}
\newtheorem*{proposition*}{Proposition}
\newtheorem{definition}{Definition}
\newtheorem*{definition*}{Definition}
\newtheorem{remark}{Remark}
\newtheorem*{remark*}{Remark}
\newtheorem*{lemma*}{Lemma}
\newtheorem*{corollary*}{Corollary}
\DeclareMathOperator{\ob}{Ob}
\DeclareMathOperator{\id}{Id}
\newcommand{\set}[1]{{\{\,{#1}\,\}}}
\newcommand{\range}[2]{{\{{#1}, \dots,{#2}\}}}
\newcommand{\R}{{\mathbb{R}}}
\newcommand{\Cat}{{\mathcal{C}}}
\newcommand{\meas}{{\mathcal{M}}}
\newcommand{\func}{{\mathcal{F}}}
\newcommand{\Vect}{{\mathbf{Vect}}}
\newcommand{\Mod}{{\mathbf{Mod}}}
\newcommand{\CAlg}{{\mathbf{CAlg}}}
\newcommand{\Subm}{{\mathbf{Subm}}}
\newcommand{\Meas}{\mathbf{Meas}_{0\textnormal{R}}}
\newcommand{\Gr}{{\mathbf{Gr}}}
\newcommand{\CAT}{{\mathbf{Cat}}}
\newcommand{\superscript}[1]{^\textnormal{#1}}
\newcommand{\density}{|\Lambda|}
\newcommand{\sections}[1]{C^\infty({#1})}
\newcommand{\sectionscs}[1]{C^\infty_0({#1})}
\newcommand{\anon}{{\text{\textendash}}}
\newcommand{\extranat}{\overset{\cdot\cdot}{\Rightarrow}}
\newcounter{diagram}
\newenvironment{diagram}{\setcounter{diagram}{\value{equation}}\refstepcounter{diagram}}{}
\crefname{diagram}{diag.}{diags.}
\Crefname{diagram}{Diagram}{Diagrams}
\title{Neural network layers as parametric spans}
\author{
    Mattia G. Bergomi
    \thanks{Correspondence at \url{mattiagbergomi@gmail.com}}
    \and Pietro Vertechi
    \thanks{Correspondence at \url{pietro.vertechi@protonmail.com}}
}
\date{}
\begin{document}
\maketitle
\begin{abstract}
    Properties such as composability and automatic differentiation made artificial neural networks a pervasive tool in applications. Tackling more challenging problems caused neural networks to progressively become more complex and thus difficult to define from a mathematical perspective. We present a general definition of linear layer arising from a categorical framework based on the notions of {\em integration theory} and {\em parametric spans}. This definition generalizes and encompasses classical layers (e.g., dense, convolutional), while guaranteeing existence and computability of the layer's derivatives for backpropagation.
\end{abstract}

\section{Introduction}

In recent years, {\em artificial neural networks} have been applied to ever more general problems, incorporating the most diverse operators and intricate architectures. Unlike the initial definitions~\cite{rumelhart1986learning}, which could be easily formalized as directed graphs, modern neural networks do not obey a precise mathematical definition.

We use the general language of category theory to define a broad class of linear layer structures, which encompasses most classically known examples---dense and convolutional layers, as well as geometric deep learning layers.
The key ingredient is a general, categorical definition of {\em integration theory} that, combined with the notion of {\em parametric spans}, yields a flexible framework where layer-like bilinear operators can be studied.

For machine learning applications, not only the \textit{activation values} of a model are important, but also its derivatives with respect to the parameters. Reverse-mode automatic differentiation~\cite{baydin2018automatic} is a modern, popular technique to address this issue. It attempts to define rules to backpropagate dual vectors of the output to dual vectors of the parameters or of the input. The existence of such rules is a guiding principle for our framework: we will show that, for parametric span-based layers, the reverse-mode differentiation rule can be obtained by permuting the legs of the span.

\paragraph{Structure.} In \cref{sec:integration_theories}, we introduce the notion of {\em Frobenius integration theory}, which generalizes Lebesgue integration to arbitrary source categories. The primary example we discuss is the category of manifolds and submersions. In \cref{sec:parametric_spans}, we use the notion of integration theory in tandem with parametric spans to define bilinear operators with a straightforward reverse-mode differentiation rule. This will allow us to recover several well-known linear neural network layers in \cref{sec:classical_architectures}.

\section{Integration theories}
\label{sec:integration_theories}

Our goal is to represent the {\em structure} of a linear layer of a neural network---a bilinear map from the input and parameters to the output---via a collection of maps in a familiar category. We aim to build a simple framework that is sufficiently flexible to cover most popular linear neural network layers and allow for novel generalizations. As backpropagation is crucial for deep learning, we also require that the dual of the linear layers we define can be computed effectively in our framework. In this section, we establish the necessary preliminary notions to achieve that.

Let $\Vect_K$ and $\CAlg_K$ denote the categories of vector spaces and commutative algebras over a base field $K$. For a commutative $K$-algebra $A$, let $\Mod_A$ denote the category of modules over $A$, and let $\Mod_A / K$ denote the comma category~\cite[Sect.~II.6]{mac2013categories} of
\begin{equation*}
    \Mod_A \rightarrow \Vect_K \xleftarrow{K} \bullet.
\end{equation*}
Explicitly, an object in $\Mod_A / K$ is an $A$-module $M$ equipped with a $K$-linear functional $\epsilon \colon M \rightarrow K$. A morphism $(M_1, \epsilon_1)\rightarrow (M_2, \epsilon_2)$ in $\Mod_A / K$ is an $A$-module homomorphism that makes the following diagram commute.
\begin{equation*}
    \begin{tikzcd}[column sep=small]
        M_1 \arrow[swap]{rd}{\epsilon_1} \arrow[dashed]{rr} && M_2 \arrow{ld}{\epsilon_2} \\
        & K &
    \end{tikzcd}
\end{equation*}
Finally, we denote $\Gr(\Mod/K)$ the {\em covariant Grothendieck construction}~\cite[Sect.~5.5]{richter2020categories} associated to the functor $\Mod / K \colon \CAlg_K\superscript{op} \rightarrow \CAT$.

\begin{remark}
    The category $\Gr(\Mod/K)$ can also be denoted $\mathbf{Lens}_{\Mod/K}\superscript{op}$, see~\cite[Def.~3.3]{spivak2019generalized}.
\end{remark}

\begin{remark}
    For simplicity, we have chosen to work with modules over commutative $K$-algebras. However, the definitions work more generally. Our main results---\cref{prop:bilinear_operator,prop:operator_adjoint}---can be proved diagrammatically and hold for modules over commutative algebras in any symmetric monoidal category. In particular, to introduce a notion of continuity, one could work with normed modules over normed commutative algebras.
\end{remark}

\begin{definition}\label{def:frobenius_integration_theory}
    A {\em Frobenius integration theory} on a category $\Cat$ consists of a functor
    \begin{equation*}
        \Cat\rightarrow \Gr(\Mod/K).
    \end{equation*}
\end{definition}

It is helpful to unpack \cref{def:frobenius_integration_theory}. A functor $\Cat\rightarrow \Gr(\Mod/K)$ associates to each object $X\in\ob(\Cat)$ a commutative algebra $\func(X)$ and a $\func(X)$-module $\meas(X)$, as well as a $K$-linear functional $\int_X\colon \meas(X)\rightarrow K$.

\paragraph{Functors.} $\func$ and $\meas$ individually can be regarded as functors
\begin{equation*}
    \func \colon\Cat\superscript{op}\rightarrow \CAlg_K
    \text{ and }
    \meas \colon \Cat \rightarrow \Vect_K.
\end{equation*}
For simplicity, given a morphism $f\colon X\rightarrow Y$ in $\Cat$, we will use the pullback and pushforward notation to refer to $\func(f)$ and $\meas(f)$:
\begin{equation*}
    f^* := \func(f)
    \quad \text{ and } \quad
    f_* := \meas(f).
\end{equation*}

\paragraph{Action.} We denote $\cdot$ the action of $\func(X)$ on $\meas(X)$. Linearity under restriction of scalars corresponds to a condition akin to {\em Frobenius reciprocity} for adjoint functors. More explicitly, given a morphism $f\colon X\rightarrow Y$ in $\Cat$, $y\in \func(Y)$, and $\mu \in \meas(X)$, we have
\begin{equation}\label{eq:frobenius}
    f_*(f^* y \cdot \mu) = y\cdot f_*\mu.
\end{equation}
Note that we use the letter $y$ to denote an element of $\func(Y)$ and not a point of $Y$.

\paragraph{Functional.} Finally, $\int$ is a family of $K$-linear functionals $\int_X\colon \meas(X)\rightarrow K$ such that
\begin{equation}\label{eq:naturality}
    \int_X \mu = \int_Y f_*\mu,
\end{equation}
for all $\mu \in \meas(X)$ and $f\colon X \rightarrow Y$.

\begin{proposition}\label{prop:frobenius_integration_theory}
    The data of a Frobenius integration theory on an arbitrary source category $\Cat$ is equivalent to the following.
    \begin{itemize}
        \item Functors $\func \colon\Cat\superscript{op}\rightarrow \CAlg_K$ and $\meas \colon\Cat\rightarrow \Vect_K$.
        \item A family of actions $\func(X) \otimes \meas(X) \rightarrow \meas(X)$ that respects \cref{eq:frobenius}.
        \item A family of $K$-linear functionals $\int_X\colon \meas(X)\rightarrow K$ that respects \cref{eq:naturality}.
    \end{itemize}
\end{proposition}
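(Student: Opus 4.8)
The plan is to prove the equivalence simply by unpacking the data of a functor $\Cat \to \Gr(\Mod/K)$ through the explicit pointwise description of the covariant Grothendieck construction, and then matching each component against the three listed items; the proof is in essence a translation, with no substantive external input required. I would begin by recalling that $\Gr(\Mod/K)$, being the covariant Grothendieck construction of $\Mod/K \colon \CAlg_K\superscript{op} \to \CAT$, comes with a projection to $\CAlg_K\superscript{op}$: an object is a commutative $K$-algebra $A$ together with an object $(M, \epsilon)$ of $\Mod_A/K$, and a morphism $(A, (M, \epsilon_M)) \to (B, (N, \epsilon_N))$ consists of a $K$-algebra homomorphism $\phi \colon B \to A$ together with a morphism $M \to N$ in $\Mod_B/K$, where $M$ is viewed as a $B$-module by restriction of scalars along $\phi$. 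Concretely, the second datum is a $K$-linear map $g \colon M \to N$ satisfying $g(\phi(b)\cdot m) = b\cdot g(m)$ for all $b \in B$, $m \in M$, and $\epsilon_N \circ g = \epsilon_M$. I would also record that composition in $\Gr(\Mod/K)$ is computed componentwise and, because the transition functors of $\Mod/K$ are restrictions of scalars, it reduces on underlying objects to composition of $K$-algebra homomorphisms (in the opposite direction) and of $K$-linear maps.

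Given a functor $H \colon \Cat \to \Gr(\Mod/K)$, I would write $H(X) = (\func(X), (\meas(X), \int_X))$ for $X \in \ob(\Cat)$ and, for $f \colon X \to Y$, denote by $f^* \colon \func(Y) \to \func(X)$ and $f_* \colon \meas(X) \to \meas(Y)$ the two components of $H(f)$. Functoriality of $H$, together with the componentwise form of composition just noted, then says precisely that $\func$ is a contravariant functor $\Cat\superscript{op} \to \CAlg_K$ and that $\meas$, after forgetting module structures, is a covariant functor $\Cat \to \Vect_K$. The module structures on the objects $\meas(X)$ furnish the family of actions $\func(X) \otimes \meas(X) \to \meas(X)$, and the distinguished functionals $\int_X = \epsilon_{\meas(X)}$ are the required $K$-linear functionals. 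The single requirement that each $f_*$ be a morphism in $\Mod_{\func(Y)}/K$ now splits into exactly two pieces: $\func(Y)$-linearity with respect to the action restricted along $f^*$, which is the Frobenius reciprocity relation \eqref{eq:frobenius}, and compatibility with the functionals, which is the naturality relation \eqref{eq:naturality}. This produces the three bullet points.

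Conversely, from the three items I would define $H$ on objects by $X \mapsto (\func(X), (\meas(X), \int_X))$---a well-formed object of $\Gr(\Mod/K)$ since $\meas(X)$ is a $\func(X)$-module and $\int_X$ is $K$-linear---and on morphisms by $f \mapsto (\func(f), \meas(f))$. Checking that $(\func(f), \meas(f))$ is a genuine morphism of $\Gr(\Mod/K)$ amounts exactly to relations \eqref{eq:frobenius} and \eqref{eq:naturality}, and functoriality of $H$ follows from that of $\func$ and $\meas$ together with the componentwise description of composition. The two passages are evidently mutually inverse. I do not expect a real obstacle here: the only points requiring care are keeping the variances straight and recognizing that a single morphism condition in the comma category $\Mod/K$ decomposes into the two displayed equations; in particular no pseudofunctoriality subtleties arise, since restriction of scalars can be taken strictly functorial, so that $\Mod/K$ is an honest functor and $\Gr(\Mod/K)$ an honest category.
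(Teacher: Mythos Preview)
Your proposal is correct and is exactly the ``straightforward verification'' the paper alludes to: you have simply written out in full the unpacking of a functor into the Grothendieck construction $\Gr(\Mod/K)$ and matched each component to the three bullet points. There is no substantive difference in approach---the paper omits all details, and you have supplied them accurately.
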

\begin{proof}
    Straightforward verification.
\end{proof}

\begin{proposition}\label{prop:extranaturality}
    $\int$ and $\cdot$ induce an extranatural transformation~\cite{eilenberg1966generalization} (denoted by $\extranat$)
    \begin{equation*}
        \func \otimes \meas \extranat K,
        \quad\text{ given by } x \otimes \mu \mapsto \int_X x\cdot \mu.
    \end{equation*}
    In other words, for all $f\colon X \rightarrow Y$, $\mu \in \meas(X)$, and $y \in \func(Y)$,
    \begin{equation}\label{eq:extranaturality}
        \int_X f^* y \cdot \mu = \int_Y y\cdot f_*\mu.
    \end{equation}
\end{proposition}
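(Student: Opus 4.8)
The plan is to reduce the extranaturality condition to a two-step rewrite using identities already in hand. First I would recall what an extranatural transformation into a constant functor amounts to: since the target $K$ here denotes the constant functor $\Cat^{\textnormal{op}}\times\Cat\to\Vect_K$ at $K$, the defining hexagon of \cite{eilenberg1966generalization} degenerates—two of its edges become identities—so the data of $\func\otimes\meas\extranat K$ is exactly a family of $K$-linear maps $\alpha_X\colon\func(X)\otimes\meas(X)\to K$ such that, for every $f\colon X\to Y$, the square emanating from $\func(Y)\otimes\meas(X)$ with edges $f^*\otimes\id$ and $\id\otimes f_*$ is closed up by $\alpha_X$ and $\alpha_Y$. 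Evaluated on a generator $y\otimes\mu$, this square is precisely \cref{eq:extranaturality}, so it suffices to produce $\alpha$ and verify that single identity.

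I would first observe that the proposed assignment $x\otimes\mu\mapsto\int_X x\cdot\mu$ is well defined as a map out of the tensor product: by \cref{prop:frobenius_integration_theory} the action $\func(X)\otimes\meas(X)\to\meas(X)$ is $K$-bilinear and $\int_X$ is $K$-linear, so the composite $\func(X)\times\meas(X)\to K$ is $K$-bilinear and factors through $\func(X)\otimes\meas(X)$. This gives the family $\alpha_X$.

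The heart of the argument is then the computation
\[
    \int_X f^* y \cdot \mu \;\overset{\eqref{eq:naturality}}{=}\; \int_Y f_*\!\left(f^* y \cdot \mu\right) \;\overset{\eqref{eq:frobenius}}{=}\; \int_Y y \cdot f_* \mu,
\]
where the first equality is the naturality of $\int$ from \cref{eq:naturality} applied to the element $f^* y\cdot\mu\in\meas(X)$, and the second is Frobenius reciprocity \cref{eq:frobenius}. This is exactly \cref{eq:extranaturality}, hence the required square commutes for every $f$.

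I do not expect a genuine obstacle: the only care needed is bookkeeping—correctly matching the two legs of the degenerate hexagon to the two sides of \cref{eq:extranaturality}, which relies on $\meas$ being covariant, $\func$ contravariant, and the target constant. As flagged in the final remark of \cref{sec:integration_theories}, the same two rewrites can be read as string-diagram manipulations, so the proof goes through verbatim for commutative algebras in any symmetric monoidal category without reference to elements.
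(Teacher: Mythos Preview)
Your proposal is correct and follows the same approach as the paper: the core is the two-step rewrite $\int_X f^*y\cdot\mu = \int_Y f_*(f^*y\cdot\mu) = \int_Y y\cdot f_*\mu$ using \cref{eq:naturality} then \cref{eq:frobenius}. You add helpful context the paper omits (why the extranaturality hexagon degenerates to this single square, and why $\alpha_X$ is well defined on the tensor product), but the substantive argument is identical.
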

\begin{proof}
    By direct computation,
    \begin{alignat*}{4}
        \int_X f^* y \cdot \mu
        &= \int_Y f_*(f^* y \cdot \mu) && \quad\quad\text{ by \cref{eq:naturality}}\\
        &= \int_Y y\cdot f_*\mu && \quad\quad\text{ by \cref{eq:frobenius}}.
    \end{alignat*}
\end{proof}

\subsection{Examples}
\label{sec:examples}

\subsubsection*{Measurable spaces}

To form an intuition on \cref{def:frobenius_integration_theory}, it is helpful to think of $\func(X)$ as {\em functions} over $X$, and of $\meas(X)$ as {\em measures} over $X$. Indeed, an important example of integration theory comes from the category of measurable spaces $\Meas$, whose objects are measurable spaces equipped with a $\sigma$-ideal of measure $0$ subsets, and whose morphisms are equivalence classes of nullset-reflecting measurable functions.

To each $X\in\ob(\Meas)$, we associate the algebra $L^\infty(X)$ of equivalence classes of essentially bounded measurable function and the $L^\infty(X)$-module $ba(X)$ of bounded and finitely additive signed measures. To show that this is indeed a Frobenius integration theory, we need to verify \cref{eq:frobenius,eq:naturality}.
\Cref{eq:frobenius} follows from the adjunction between pullback of a function and pushforward of a measure. More explicitly, given a nullset-reflecting measurable function $f\colon X\rightarrow Y$, a measurable subspace $S\subseteq Y$, $y \in L^\infty(Y)$, and $\mu \in ba(X)$,
\begin{equation*}
    (y\cdot f_*\mu)(S) = \int_S y\, d(f_*\mu) = \int_{f^{-1}(S)} f^* y\, d\mu = f_*(f^*y\cdot \mu)(S),
\end{equation*}
hence the measures $y\cdot f_*\mu$ and $f_*(f^*y\cdot \mu)$ coincide. Verifying \cref{eq:naturality} is straightforward:
\begin{equation*}
    \int_Y d(f_*\mu) = (f_*\mu)(Y) = \mu(f^{-1}(Y)) = \mu(X) = \int_X d\mu.
\end{equation*}

\subsubsection*{Submersions}

The aim of this section is to define an {\em integration theory} based on smooth spaces, which we will use to give practical examples of neural network layers. To proceed, we will need a few technical assumptions. Whenever we use the word manifold, we refer to smooth manifolds. Furthermore, we require manifolds to be paracompact Hausdorff spaces. We remind the reader that a submersion is a smooth map whose differential is, at every point, surjective. We denote $\Subm$ the category of manifolds and submersions.

We can associate to a manifold its space of smooth real-valued functions $\sections{X}$.
This extends to a functor $\Subm\superscript{op} \rightarrow \CAlg_\R$ via pullback of functions (precomposition). Given a submersion $f\colon X \rightarrow Y$ and $y \in \sections{Y}$, we denote the pullback $f^*y$.
We denote $\sectionscs{\density_X}$ the space of smooth densities~\cite[Sect.~1.1]{berline2003heat} of compact support. Given a submersion $f\colon X \rightarrow Y$ and a density of compact support $\mu \in \sectionscs{\density{_X}}$, we denote the pushforward $f_*\mu$. Thus, $\sectionscs{\density_X}$ extends to a covariant functor $\Subm\rightarrow\Vect_\R$.
\begin{remark}
    The pushforward of a smooth density is well defined and smooth for {\em proper} submersions. However, here we are working with densities of compact support, so $f$ is automatically proper on the support of $\mu$.
\end{remark}

The pointwise multiplication map
\begin{equation*}
    \sections{X} \otimes \sectionscs{\density_{X}}
    \rightarrow \sectionscs{\density_{X}},
\end{equation*}
endows $\sectionscs{\density_{X}}$ with the structure of a $\sections{X}$-module. To prove \cref{eq:frobenius}, let us fix a positive density $\nu$ on $Y$. Then, for all point $p\in Y$,
\begin{align*}
    f_*(f^*y\cdot \mu)(p)
    &= \nu(p)\int_{f^{-1}(p)} (f^*y\cdot \mu)/f^*(\nu)\\
    &= \nu(p)\int_{f^{-1}(p)} f^*y\cdot (\mu/f^*(\nu))\\
    &= y(p)\nu(p)\int_{f^{-1}(p)} (\mu/f^*(\nu))\\
    &= (y \cdot f_*\mu)(p).
\end{align*}
Verifying \cref{eq:naturality} is also straightforward. Hence, we can conclude that
\begin{equation*}
    \sections{\anon},\, \sectionscs{\density{_\anon}},\, \int
\end{equation*}
define a Frobenius integration theory. In \cref{sec:classical_architectures}, we will use this particular integration theory to recover several classical neural network layers.

\section{Parametric spans}
\label{sec:parametric_spans}

To formally describe neural network layers with locality and weight sharing constraints, we introduce the notion of {\em parametric span}---a span with an added space of parameters (or weights). It is represented by the following diagram.
\begin{diagram}\label{diagram:parametric_span}
    \begin{equation}
        \begin{tikzcd}
            & E \arrow[swap]{ld}{s} \arrow{d}{\pi} \arrow{rd}{t} & \\
            X & W & Y
        \end{tikzcd}
    \end{equation}
\end{diagram}
In this representation, $X$ represents the space of {\em input data}, $Y$ the space of {\em output data}, $E$ the space of {\em edges}, and $W$ the space of {\em weights}.
Intuitively, this is an abstract representation of the notions of {\em locality} and {\em weight sharing} in deep learning. The span
\begin{equation*}
    \begin{tikzcd}
        & E \arrow[swap]{ld}{s} \arrow{rd}{t} & \\
        X & & Y
    \end{tikzcd}
\end{equation*}
determines the connectivity structure of the network (which inputs are connected to which outputs).
The map
\begin{equation*}
    \begin{tikzcd}
        E  \arrow{d}{\pi} \\
        W
    \end{tikzcd}
\end{equation*}
enforces weight sharing along the fibers of $\pi$.

\begin{proposition}\label{prop:bilinear_operator}
    A parametric span, as in \cref{diagram:parametric_span}, induces a $K$-linear map
    \begin{equation}\label{eq:bilinear_operator}
        \begin{aligned}
            \func(X) \otimes \func(W) \otimes \meas(E) &\rightarrow \meas(Y)\\
            x \otimes w \otimes \mu &\mapsto t_* (s^* x \cdot \pi^* w\cdot \mu).
        \end{aligned}
    \end{equation}
\end{proposition}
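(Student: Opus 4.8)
The plan is to exhibit the assignment $x\otimes w\otimes\mu\mapsto t_*(s^* x\cdot\pi^* w\cdot\mu)$ as a composite of maps each already known to be $K$-linear (or $K$-multilinear), and then invoke the universal property of the tensor product over $K$. I do not expect a genuinely hard step here; the work is just bookkeeping about which piece of structure supplies each instance of linearity, in exactly the ``diagrammatic'' spirit flagged in the remark after \cref{def:frobenius_integration_theory}.

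First I would unwind the three legs of the span in \cref{diagram:parametric_span}. Since $\func\colon\Cat\superscript{op}\rightarrow\CAlg_K$ is a functor into commutative $K$-algebras, the morphisms $s\colon E\rightarrow X$ and $\pi\colon E\rightarrow W$ induce $K$-algebra homomorphisms $s^*\colon\func(X)\rightarrow\func(E)$ and $\pi^*\colon\func(W)\rightarrow\func(E)$; in particular each of them is $K$-linear. Likewise, since $\meas\colon\Cat\rightarrow\Vect_K$ is a functor, $t_*\colon\meas(E)\rightarrow\meas(Y)$ is $K$-linear. Finally, the action $\func(E)\otimes\meas(E)\rightarrow\meas(E)$ that makes $\meas(E)$ a $\func(E)$-module is by definition $K$-bilinear, and associativity together with commutativity of $\func(E)$ guarantee that the expression $s^* x\cdot\pi^* w\cdot\mu$ — multiply $s^*x$ and $\pi^*w$ in $\func(E)$, act on $\mu$, then push forward along $t$ — is unambiguous, so the formula is well posed.

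Next I would assemble the composite. Read left to right, the formula is the composition
\[
\begin{aligned}
    \func(X)\otimes\func(W)\otimes\meas(E)
    &\xrightarrow{\,s^*\otimes\pi^*\otimes\id\,}\func(E)\otimes\func(E)\otimes\meas(E)\\
    &\rightarrow\func(E)\otimes\meas(E)\xrightarrow{\;\cdot\;}\meas(E)\xrightarrow{\;t_*\;}\meas(Y),
\end{aligned}
\]
where the unlabelled arrow is the multiplication of $\func(E)$ tensored with $\id_{\meas(E)}$ and the next one is the module action. Each arrow is $K$-linear, hence so is the composite; equivalently, the map $(x,w,\mu)\mapsto t_*(s^* x\cdot\pi^* w\cdot\mu)$ is separately $K$-linear in $x$, in $w$, and in $\mu$, so by the universal property of $\otimes_K$ it factors through a unique $K$-linear map out of $\func(X)\otimes\func(W)\otimes\meas(E)$ with the stated formula. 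It is worth noting that \cref{eq:frobenius,eq:naturality}, the conditions distinguishing a Frobenius integration theory from a bare pair of functors with an action, play no role in this proposition; they will only enter for the reverse-mode differentiation rule. And since nothing above uses more than ``$\func$ takes values in commutative algebras, $\meas$ in their modules, and the action is an algebra action,'' the argument transports verbatim to commutative algebra objects in any symmetric monoidal category, as the earlier remark anticipates.
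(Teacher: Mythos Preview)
Your proof is correct and follows essentially the same approach as the paper: exhibit the map as the composite $\func(X)\otimes\func(W)\otimes\meas(E)\xrightarrow{s^*\otimes\pi^*\otimes\id}\func(E)\otimes\func(E)\otimes\meas(E)\rightarrow\meas(E)\xrightarrow{t_*}\meas(Y)$ of $K$-linear maps. Your version merely unpacks the middle arrow into the algebra multiplication followed by the module action and adds the (accurate) observation that \cref{eq:frobenius,eq:naturality} are not used here.
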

\begin{proof}
    The above map can be obtained as
    \begin{equation*}
        \func(X) \otimes \func(W) \otimes \meas(E) \xrightarrow{s^* \, \otimes \, \pi^* \, \otimes \, \id}
        \func(E) \otimes \func(E) \otimes \meas(E) \rightarrow
        \meas(E) \xrightarrow{t_*}
        \meas(Y).
    \end{equation*}
\end{proof}
\noindent In practical applications, we will fix $\mu \in \meas(E)$ and consider the map in \cref{eq:bilinear_operator} as a bilinear map from the input and parameters to the output.
\begin{proposition}\label{prop:operator_adjoint}
    For all parametric span as in \cref{diagram:parametric_span}, the following diagram commutes.
    \begin{equation*}
        \begin{tikzcd}
            \func(X) \otimes \func(Y) \otimes \func(W) \otimes \meas(E)
            \arrow{r}{}
            \arrow{d}{}
            & \func(X) \otimes \meas(X) \arrow{d}{}\\
            \func(Y) \otimes \meas(Y) \arrow{r}{} & K
        \end{tikzcd}
    \end{equation*}
    Equivalently, in formulas, for all $x\in \func(X), \, y\in \func(Y), \, w\in \func(W)$, and $\mu\in\meas(E)$,
    \begin{equation}\label{eq:operator_adjoint}
        \int_Y y \cdot t_* (s^* x \cdot \pi^* w \cdot \mu) = \int_X x \cdot s_* (t^* y \cdot \pi^* w \cdot \mu).
    \end{equation}
\end{proposition}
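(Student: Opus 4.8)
The plan is to show that the two composites around the square both reduce to a single integral over $E$. First I would unwind the diagram. The right-then-down composite keeps $x$ untouched and applies to $y\otimes w\otimes\mu$ the ``leg-swapped'' instance of \cref{prop:bilinear_operator} (the one associated to the parametric span with $s$ and $t$ interchanged), producing $x\otimes s_*(t^*y\cdot\pi^*w\cdot\mu)$, and then pairs via the extranatural evaluation $\func(X)\otimes\meas(X)\to K$ of \cref{prop:extranaturality}, giving $\int_X x\cdot s_*(t^*y\cdot\pi^*w\cdot\mu)$. The down-then-right composite keeps $y$ untouched and applies the original instance of \cref{prop:bilinear_operator}, producing $y\otimes t_*(s^*x\cdot\pi^*w\cdot\mu)$, and then pairs via $\func(Y)\otimes\meas(Y)\to K$, giving $\int_Y y\cdot t_*(s^*x\cdot\pi^*w\cdot\mu)$. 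Hence the square commutes precisely when \cref{eq:operator_adjoint} holds, which is why the statement records the two formulations as equivalent.

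To prove \cref{eq:operator_adjoint} I would invoke the extranaturality identity \cref{eq:extranaturality} twice. Applied to the morphism $t\colon E\to Y$, with $s^*x\cdot\pi^*w\cdot\mu\in\meas(E)$ in the role of $\mu$ and $y\in\func(Y)$, it yields
\begin{equation*}
    \int_Y y\cdot t_*(s^*x\cdot\pi^*w\cdot\mu) = \int_E t^*y\cdot s^*x\cdot\pi^*w\cdot\mu.
\end{equation*}
Applied instead to $s\colon E\to X$, with $t^*y\cdot\pi^*w\cdot\mu\in\meas(E)$ and $x\in\func(X)$, it yields
\begin{equation*}
    \int_X x\cdot s_*(t^*y\cdot\pi^*w\cdot\mu) = \int_E s^*x\cdot t^*y\cdot\pi^*w\cdot\mu.
\end{equation*}
The two right-hand sides are the $E$-integral of the same element of $\meas(E)$: since $\func(E)$ is a commutative algebra acting on $\meas(E)$, the scalars $s^*x$, $t^*y$, $\pi^*w$ commute and associate within the product, so $t^*y\cdot s^*x\cdot\pi^*w\cdot\mu = s^*x\cdot t^*y\cdot\pi^*w\cdot\mu$. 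Comparing the two displays gives \cref{eq:operator_adjoint}.

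I do not expect a genuine obstacle here; the only care needed is bookkeeping — matching each edge of the square to the correct instance of \cref{prop:bilinear_operator} and of the evaluation pairing, and checking that the reordering of scalars is justified purely by the module axioms. This last point is exactly what makes the argument survive in the generality mentioned in the second remark: over a symmetric monoidal base the same computation goes through diagrammatically, with the symmetry of the tensor product replacing the naive commutation of the action, so \cref{eq:operator_adjoint} continues to hold.
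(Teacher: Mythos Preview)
Your proposal is correct and follows essentially the same approach as the paper: two applications of \cref{eq:extranaturality} (for $t$ and for $s$) together with commutativity of the $\func(E)$-action on $\meas(E)$ to reorder $s^*x$ and $t^*y$. The only cosmetic difference is that the paper presents the computation as a single three-line chain of equalities rather than computing the two sides separately and comparing.
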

\begin{proof}
    \Cref{eq:operator_adjoint} can be proved via direct calculation:
    \begin{align*}
        \int_Y y \cdot t_* (s^* x \cdot \pi^* w \cdot \mu)
        &= \int_E t^*y \cdot s^* x \cdot \pi^* w \cdot \mu &&\text{ by \cref{eq:extranaturality}}\\
        &= \int_E s^* x \cdot t^*y \cdot \pi^* w \cdot \mu &&\text{ by commutativity}\\
        &= \int_X x \cdot s_* (t^*y \cdot \pi^* w \cdot \mu) &&\text{ by \cref{eq:extranaturality}}.
    \end{align*}
\end{proof}

\Cref{prop:operator_adjoint} is especially relevant for reverse-mode differentiation, i.e., mapping a dual vector of the output to the corresponding dual vector of the input.
If the dual vector of the output is of the form
\begin{equation*}
    \int_Y y \cdot \anon
\end{equation*}
for some $y \in \func(Y)$, then thanks to \cref{eq:operator_adjoint}, the reverse-mode differentiation rule with respect to the input is
\begin{equation*}
    \int_Y y \cdot t_* (s^* \anon \cdot \pi^* w \cdot \mu) = \int_X \anon \cdot s_* (t^* y \cdot \pi^* w \cdot \mu).
\end{equation*}
Thus, the dual vector represented by $y \in \func(Y)$ is mapped to the dual vector represented by $s_* (t^* y \cdot \pi^* w \cdot \mu)\in \meas(X)$. In other words, the reverse-mode differentiation rule for the input---and, by symmetry, for the parameters---can be obtained by reordering the legs of the parametric span.

\section{Classical architectures}
\label{sec:classical_architectures}

Our framework encompasses radically different classical neural architectures. Roughly speaking, we will discuss discrete and continuous architectures, with or without symmetry (weight sharing).

\paragraph{Dense layer.} Multi-Layer Perceptrons (MLPs)~\cite{rumelhart1986learning} are the simplest neural network, as they are a discrete architecture with no symmetry based on matrix multiplication. In this non-equivariant case, i.e., when the network does not respect any symmetries of the problem, the map $\pi$ is an isomorphism. To see this in practice, let us consider a layer with $n_i$ input nodes and $n_o$ output nodes. For ease of notation, we identify each natural number element $n$ with the set $\range{0}{n-1}$. We define a discrete parametric span as follows.
\begin{diagram}\label{eq:MLP}
    \begin{equation}
        \begin{tikzcd}
            & n_i \times n_o \arrow[swap]{ld} \arrow{d} \arrow{rd} & \\
            n_i & n_i\times n_o & n_o
        \end{tikzcd}
    \end{equation}
\end{diagram}
Source and target maps are given by the product projections.

\paragraph{Convolutional layer.} Convolutional Neural Networks (CNNs)~\cite{lecun1989backpropagation} represent a more interesting case, as they introduce spatial symmetry and locality. Let us, for simplicity, consider a purely convolutional layer with $n_i$ input channels and $n_o$ output channels. Let $S_i, S_o$ denote the shapes of the input and output images, and let $F$ denote the shape of the filter. To define the parametric span, we can proceed as in the dense layer case, with an important difference: the map $\pi$ is no longer trivial, and fibers along $\pi$ represent output image shapes.
\begin{diagram}\label{eq:conv}
    \begin{equation}
        \begin{tikzcd}
            & n_i \times n_o \times F \times S_o \arrow[swap]{ld} \arrow{d} \arrow{rd} & \\
            n_i \times S_i & n_i \times n_o \times F & n_o \times S_o
        \end{tikzcd}
    \end{equation}
\end{diagram}
The target morphism and the weight sharing morphism are projections, whereas the source morphism relies on a linear map
\begin{equation*}
    F \times S_o \rightarrow S_i.
\end{equation*}
The coefficient of the first argument encodes the {\em dilation} of the convolutional layer, whereas the coefficient of the second argument encodes the {\em stride}.

\paragraph{Geometric deep learning.}
Neural networks for non-Euclidean domains, such as graphs or manifolds, share many features with CNNs and can be handled in a similar way. The formulation in~\cite{monti2017geometric} is particularly suitable to our framework for two reasons. On the one hand, it encompasses many other approaches (Geodesic CNN~\cite{masci2015geodesic}, Anisotropic CNN~\cite{boscaini2016learning}, Diffusion CNN~\cite{atwood2015diffusion}, Graph CN~\cite{kipf2016semi}). On the other hand, it can be directly translated into our formalism. The authors of~\cite{monti2017geometric} postulate a {\em neighborhood relation} $q \in \mathcal{N}(p)$ on a Riemannian manifold $X$, together with local $d$-dimensional coordinates $\mathbf{u}(p, q)$ on pairs of neighbors. In our framework, this translates to the following parametric span.
\begin{diagram}\label{eq:geometric}
    \begin{equation}
        \begin{tikzcd}
            & \set{(p, q) \mid q \in \mathcal{N}(p)} \arrow[swap]{ld} \arrow{d}{\mathbf{u}} \arrow{rd} & \\
            X & \R^d & X
        \end{tikzcd}
    \end{equation}
\end{diagram}
The source and target maps are projections, whereas the weight sharing map is given by the local coordinates $\mathbf{u}$. The Riemannian structure, on which geometric deep learning is based, naturally induces a density $\mu$ on $E = \set{(p, q) \mid q \in \mathcal{N}(p)}$.
\begin{remark}
    Some care is needed as $\mu$ will in general not have compact support: densities induced by a Riemannian metric are positive. Hence, if the space $E$ is not compact, it will be necessary to multiply $\mu$ with an appropriate bump function.
\end{remark}

\section{Discussion}

We provide categorical foundations to the study of linear layers in deep learning. We abstract away the key ingredients to define linear layers (i.e., bilinear maps) and describe them in categorical terms. Our framework is based on two pillars: integration theories and parametric spans. Both notions are valid in arbitrary source categories, thus granting full generality to our approach.

Not only computing values (the forward pass) but also computing derivatives (the backward pass) is crucial in deep learning. Guided by this principle, we devise our framework in such a way that the backward pass has the same structure as the forward pass and, therefore, a comparable computational cost.

To examine concrete examples, we primarily explore integration theories on the category of nullset-reflecting measurable functions and on the category of smooth submersions. The latter, in particular, is a rich source of examples of linear layers. We recover dense and convolutional layers, as well as most complex structures arising in geometric deep learning. Indeed, a general approach to geometric deep learning, described in~\cite{monti2017geometric}, was an important inspiration for this work.

Describing a linear layer structure by means of smooth submersions between manifolds has unique advantages. We show that, in the case of convolutional layers, the smooth submersion determines the hyperparameters of the layer (such as stride or dilation). We envision that such smooth maps could be optimized (together with the regular parameters) during gradient descent. In our view, this is a promising, efficient alternative to the nested optimization schemes for hyperparameters proposed in~\cite{bengio2000gradient,lorraine2020optimizing}.

Describing single linear layers represents only a small fraction of a successful deep learning framework. We have been exploring in~\cite{vertechi2020parametric,vertechi2022machines} possible formalizations of the notion of {\em global neural network architecture}. First, we developed a framework, based on category theory, where neural architectures could be formally defined and implemented. Then, borrowing tools from functional analysis, we discussed the necessary assumptions to allow for backpropagation. Those works lie at the basis of the proposed single-layer framework. We believe that, in the future, it will be valuable to combine these approaches to define global architectures by means of parametric spans.

\bibliographystyle{abbrv}
\bibliography{References}

\end{document}